\newtheorem{thm}{Theorem}[section]
\newtheorem{prop}[thm]{Proposition}
\newtheorem{dfn}[thm]{Definition}
\def\NN{{\mathbb{N}}}
\def\Acknowledgement{\goodbreak\bigskip\noindent{\bf Acknowledgement.\ }}
\begin{document}
\mainmatter              
\title{Superexponential stabilizability of degenerate parabolic equations via bilinear control}
\titlerunning{Superexponential stabilizability}  
%
\author{Piermarco Cannarsa\inst{1} \and Cristina Urbani\inst{2}}
\authorrunning{Piermarco Cannarsa et al.} 
%
\tocauthor{Piermarco Cannarsa and Cristina Urbani}
\institute{Univerist\`{a} di Roma Tor Vergata, 00133, Roma, Italy,\\
\email{cannarsa@mat.uniroma2.it},\\
\and
Gran Sasso Science Institute, 67100, L'Aquila, Italy,\\
Laboratoire Jacques-Louis Lions, Sorbonne Universit\'{e}, 75005, Paris, France,\\
\email{cristina.urbani@gssi.it}
}

\maketitle              

\begin{abstract}
The aim of this paper is to prove the superexponential stabilizability to the ground state solution of a degenerate parabolic equation of the form
\begin{equation*}
u_t(t,x)+(x^{\alpha}u_x(t,x))_x+p(t)x^{2-\alpha}u(t,x)=0,\qquad t\geq0,x\in(0,1)
\end{equation*}
via bilinear control $p\in L_{loc}^2(0,+\infty)$. More precisely, we provide a control function $p$ that steers the solution of the equation, $u$, to the ground state solution in small time with doubly-exponential rate of convergence.\\
The parameter $\alpha$ describes the degeneracy magnitude. In particular, for $\alpha\in[0,1)$ the problem is called weakly degenerate, while for $\alpha\in[1,2)$ strong degeneracy occurs. We are able to prove the aforementioned stabilization property for $\alpha\in [0,3/2)$. The proof relies on the application of an abstract result on rapid stabilizability of parabolic evolution equations by the action of bilinear control. A crucial role is also played by Bessel's functions.
\keywords{stabilization, bilinear control, degenerate equations, parabolic equations, Bessel's functions}
\end{abstract}
\section{Introduction}
The control of degenerate parabolic equations has received increasing attention by the mathematical community in recent years. In our opinion this fact is due to, at least, two reasons. First, degenerate parabolic operators occur in several applied contexts, such as population genetics \cite{cr,cc,e,ek}, fluids flows \cite{os}, and climate models \cite{d,dht,h}. Second, compared to uniformly parabolic problems, degenerate equations exhibit different behaviors from the point of view of controllability. Indeed, it is known that under the action of an additive control --- locally distributed or located at the boundary --- exact null controllability may fail if degeneracy is too violent, or else be true in any time $T>0$ (see \cite{cmvn}), or even be true after some critical time $T^*>0$, related to the distance of the control support from the degeneracy set, as proved, for instance, in \cite{bcg,bde,bmm}. 

In this paper, however, we are not interested in an additive control problem but rather in a bilinear one. More precisely, we investigate the response of the degenerate parabolic equation
\begin{equation}\label{1i}
\left\{
\begin{array}{ll}
u_t-\left(x^{\alpha} u_x\right)_x+p(t)\mu(x)u=0,& (t,x)\in (0,+\infty)\times(0,1)
\vspace{.1cm}\\
u(t,1)=0,\quad\left\{\begin{array}{ll} u(t,0)=0,& \mbox{ if }\alpha\in[0,1)\vspace{.1cm},\\
\left(x^{\alpha}u_x\right)(t,0)=0,& \mbox{ if }\alpha\in[1,2),\end{array}\right.
\vspace{.1cm}\\
u(0,x)=u_0(x).
\end{array}
\right.
\end{equation}
to the action of a scalar control $p\in L^2_{loc}(0,\infty)$. We observe that the importance of  bilinear control problems is due to the fact that they refer to materials that are able to react to  control inputs  by changing their principal parameters. This process is called \emph{catalysis} and it is described in some examples in \cite{k}. 

A stronger kind of control, which is intermediate --- in some sense --- between additive and bilinear control, is multiplicative control, where one uses a zero order coefficient, $p(t,x)$, to act upon the equation. In this direction, we recall the approximate controllability results by Khapalov et al. \cite{cfk,ck} for uniformly parabolic equations, and \cite{f} for degenerate parabolic models.

To understand the difference between bilinear and additive control it suffices to recall the celebrated negative result by Ball, Marsden and Slemrod~\cite{bms} for abstract evolution equations of the form 
\begin{equation}\label{2i}
\left\{
\begin{array}{ll}
u'(t)+Au(t)+p(t)Bu(t)=0,& t>0\vspace{.1cm}
\\
u(0)=u_0,
\end{array}\right.
\end{equation}
where $A$ is the infinitesimal generator of  a $C^0$-semigroup of bounded linear operators on a Banach space $X$, $B:X\to X$ is a bounded operator, and $p\in L^r_{loc}(0,\infty)$ for some $r>1$. Denoting the unique solution of \eqref{2i} by $u(\cdot;u_0,p)$, it was proved in \cite{bms} that the attainable set from $u_0$, defined by
\begin{equation*}
S(u_0)=\{ u(t;u_0,p)~:~t\geq 0,\; p\in L^r_{loc}(0,\infty)\},
\end{equation*}
has a dense complement. Therefore, \eqref{2i} fails to be controllable.

For hyperbolic and dispersive models, however, some positive results were later obtained. We would like to mention, in this respect, the results concerning attainable sets for the Schr{\"o}dinger and wave equations near the ground state solution, obtained in \cite{bl} and \cite{b}, respectively.

So, returning to the abstract problem \eqref{2i} for  a densely defined linear operator $A:D(A)\subset X\to X$, a natural question to investigate is the possibility of stabilizing the system near some specific solution.
We recall below a possible solution to such a problem in case  $X$ is a Hilbert space, which consists of the \emph{superexponential stabilizability}  property obtained in \cite{acu}  under the following assumptions:
\begin{equation}\label{ip}
\begin{array}{ll}
(a) & A \mbox{ is self-adjoint},
\vspace{.1cm}\\
(b) & A \mbox{ is accretive: }\langle Au,u\rangle \geq 0,\,\, \forall u\in D(A),
\vspace{.1cm}\\
(c) &\exists\,\lambda>0,\mbox{ such that }(\lambda I+A)^{-1}:X\to X \mbox{ is compact}.
\end{array}
\end{equation}
We denote  by $\{\lambda_k\}\,(0\leq \lambda_k\leq\lambda_{k+1})$ the eigenvalues of $A$ and by $\{\varphi_k\}$ the associated eigenvectors.
Recalling that $\varphi_1$ is usually called the {\em ground state} of $A$, we will refer to $\psi_1(t):=e^{-\lambda_1 t}\varphi_1$ as the {\em ground state solution} of \eqref{2i}  (with $p\equiv 0$). Finally, we denote by $B_R(u)$ the open ball of radius $R>0$, centered at $u\in X$.
\begin{thm}\label{t1}
Let $A:D(A)\subset X\to X$ be a linear operator on the Hilbert space $X$ satisfying hypothesis \eqref{ip}. Suppose that, for some  $\gamma>0$, 
\begin{equation}\label{gap}
\sqrt{\lambda_{k+1}}-\sqrt{\lambda_k}\geq \gamma,\quad\forall k\in \NN^*.
\end{equation}
Let $B: X\to X$ be a bounded linear operator with the following properties:
\begin{equation}\label{a2}
\begin{array}{l}
\langle B\varphi_1,\varphi_k\rangle\neq 0,\qquad\forall k\in\NN^*,
\vspace{.1cm}\\
\exists\, \tau>0\;\mbox{such that}\;
\displaystyle{\sum_{k\in\NN^*}\frac{e^{-2\lambda_k\tau}}{|\langle B\varphi_1,\varphi_k\rangle|^2}<\infty.}
\end{array}
\end{equation}
Then,  for every $\rho>0$ there exists $R>0$ such that  any $u_0\in B_R(\varphi_1)$ admits a control 
$p\in L^2_{loc}(0,\infty)$ such that the corresponding solution $u(\cdot;u_0,p)$ of \eqref{2i}  satisfies
\begin{equation}
||u(t)-\psi_1(t)||\leq M e^{-\rho e^{\omega t}-\lambda_1 t}\qquad \forall t\geq 0,
\end{equation}
where $M$ and $\omega$ are positive constants depending only on $A$ and $B$. 
\end{thm}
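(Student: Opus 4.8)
The plan is to pass to the \emph{ground-state frame}, establish there a quantitative local controllability lemma by the moment method, and iterate it over a sequence of time intervals on which the control is made increasingly aggressive, the point being to reproduce the convex decay profile $e^{-\rho e^{\omega t}}$. Setting $v(t):=e^{\lambda_1 t}u(t)$ one has $\|u(t)-\psi_1(t)\|=e^{-\lambda_1 t}\|v(t)-\varphi_1\|$, and $v$ solves $v'+\widetilde{A}v+p(t)Bv=0$, where $\widetilde{A}:=A-\lambda_1 I\ge 0$ has eigenvalues $\mu_k:=\lambda_k-\lambda_1$ with $\mu_1=0$ and, by \eqref{gap}, $\sqrt{\mu_{k+1}}-\sqrt{\mu_k}\ge\gamma'>0$ for all large $k$ (the remaining $\mu_2,\dots$ being positive and distinct). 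Since $\varphi_1$ is stationary for $v'+\widetilde{A}v=0$, it suffices to build $p$ with $\|v(t)-\varphi_1\|\le M e^{-\rho e^{\omega t}}$ for every $t\ge0$. Throughout I would work in the weighted spaces $X_s$, $\|z\|_s^2:=\sum_k e^{2s\mu_k}|\langle z,\varphi_k\rangle|^2$ (so $X_0=X$), using that $\|e^{-t\widetilde{A}}z\|_s\le\|z\|_{s'}$ whenever $s\le s'+t$ --- so the free flow gains regularity after a fixed time, \emph{except} on the ground-state mode $\langle\,\cdot\,,\varphi_1\rangle$, which it leaves invariant because $\mu_1=0$.

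The core step is a local controllability lemma with cost control: there are $T^*,C^*,\delta^*>0$ and $s^*>\tau$ (with $\tau$ from \eqref{a2}) such that every $v_0$ with $\|v_0-\varphi_1\|_{s^*}\le\delta^*$ admits $p\in L^2(0,T^*)$, $\|p\|_{L^2(0,T^*)}\le C^*\|v_0-\varphi_1\|_{s^*}$, steering $v_0$ to $\varphi_1$ at time $T^*$. For the linearized equation $w'+\widetilde{A}w+p(t)B\varphi_1=0$, $w(0)=v_0-\varphi_1$, the requirement $w(T^*)=0$ reduces, after projecting onto $\{\varphi_k\}$, to the moment problem
\[
\int_0^{T^*}e^{\mu_k s}\,p(s)\,ds=\frac{\langle v_0-\varphi_1,\varphi_k\rangle}{\langle B\varphi_1,\varphi_k\rangle},\qquad k\in\NN^*,
\]
which is well posed because $\langle B\varphi_1,\varphi_k\rangle\ne 0$ and is solved by $p=\sum_k c_k\sigma_k$ with $\{\sigma_k\}\subset L^2(0,T^*)$ biorthogonal to $\{e^{\mu_k\cdot}\}$; such a family exists, with subexponential bounds on $\|\sigma_k\|_{L^2}$, thanks to the gap condition on $\{\sqrt{\mu_k}\}$. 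The cost estimate then follows from the Cauchy--Schwarz inequality together with \eqref{a2}, which is exactly what renders $\sum_k\|\sigma_k\|_{L^2}^2 e^{-2\mu_k s^*}/|\langle B\varphi_1,\varphi_k\rangle|^2$ finite once $s^*>\tau$. The genuine term $p(t)Bv$ is then reinstated by a contraction-mapping argument on a small $L^2$-ball (licit for $\delta^*$ small, $B$ being bounded); switching the control off slightly before $T^*$ and letting the free flow smooth the remaining error, the fixed point moreover gives $\|v(T^*)-\varphi_1\|_{s^*}\le C\|v_0-\varphi_1\|_{s^*}^2$, where one again uses that the non-smoothed $\varphi_1$-component is small in $X_{s^*}$ just by being small in $X$.

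To reach the doubly-exponential rate I would iterate. For $R$ small, a first free evolution on $[0,t_1]$ brings $v$ into the regime $\|v(t_1)-\varphi_1\|_{s^*}\le\delta^*$. One then processes consecutive intervals $I_n=[t_{n-1},t_n]$ of fixed length $T^*$: on $I_n$ the control is chosen so that $\|v(t)-\varphi_1\|$ decays exponentially at a rate $\sigma_n$ taken to grow geometrically in $n$, tuned so that $\|v(t_n)-\varphi_1\|_{s^*}\le C\|v(t_{n-1})-\varphi_1\|_{s^*}^2$ while, \emph{inside} $I_n$, the elementary inequality $\rho e^{\omega t_{n-1}}+\sigma_n(t-t_{n-1})\ge\rho e^{\omega t}$ --- valid because $t\mapsto e^{\omega t}$ lies below its chord on $[t_{n-1},t_n]$ --- propagates the bound $\|v(t)-\varphi_1\|\le M e^{-\rho e^{\omega t}}$ from $t_{n-1}$ to $t_n$. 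Iterating $a_n\le C a_{n-1}^2$ from $a_1\le\delta^*$ gives $a_n\le C^{-1}(C\delta^*)^{2^{n-1}}$, doubly-exponential in $n$; since $n$ depends affinely on $t$, this yields the assertion with $\omega$ proportional to $1/T^*$ and $\rho$ proportional to $\ln(1/\delta^*)$, so that a sufficiently small $R$ (hence $\delta^*$) makes $\rho$ as large as prescribed while $M$ and $\omega$ depend only on $A$ and $B$. Undoing $v=e^{\lambda_1 t}u$ concludes.

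The step I expect to be the main obstacle is the simultaneous bookkeeping of the $X$-norm and the weighted norms $X_s$: the moment method needs the datum small in $X_{s^*}$, whereas the nonlinear remainder produced by one step is naturally bounded only in $X$; the smoothing of $e^{-t\widetilde{A}}$ bridges this, but it ignores the neutral mode $\varphi_1$, which must therefore be controlled through the hypothesis $\langle B\varphi_1,\varphi_1\rangle\ne 0$ (letting the control act on that mode) together with the trivial remark that a small multiple of the smooth vector $\varphi_1$ is small in every $X_s$. A second delicate point is to realize, using only controls of the size permitted by \eqref{a2}, the geometrically increasing interior rates $\sigma_n$, and to check that the convexity argument truly delivers the pointwise bound for \emph{all} $t\ge 0$ --- not just at the grid points $t_n$ --- with constants independent of $n$.
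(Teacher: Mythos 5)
This paper does not actually prove Theorem~\ref{t1}: it is recalled verbatim from the reference \cite{acu} and used as a black box, so there is no in-paper proof to compare your argument against. That said, your outline reconstructs essentially the strategy of \cite{acu}: conjugate by $e^{\lambda_1 t}$ to make $\varphi_1$ a steady state, solve the moment problem for the system linearized along the ground state using a biorthogonal family to $\{e^{(\lambda_k-\lambda_1)t}\}$ (whose existence and bounds come from the gap condition \eqref{gap}, with the cost controlled exactly by hypothesis \eqref{a2}), absorb the bilinear remainder by a fixed point, and iterate a quadratic error estimate $a_n\le Ca_{n-1}^2$ over intervals of fixed length to produce the doubly exponential profile, with $\rho$ large forced by taking $R$ small.

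One step of your sketch I would not let stand as written: the claim that inside each interval $I_n$ the control can be arranged so that $\|v(t)-\varphi_1\|$ decays \emph{pointwise} exponentially at a prescribed rate $\sigma_n$, combined with the chord inequality, is not what the moment-method construction gives (the controlled trajectory may well move away from $\varphi_1$ before returning). The standard repair, consistent with the theorem's quantifiers, is to prove instead a uniform bound $\sup_{t\in I_n}\|v(t)-\varphi_1\|\le K\,a_{n-1}$ with $K$ depending only on $A$, $B$, $T^*$, and then match $K\,a_{n-1}$ against $Me^{-\rho e^{\omega t}}$ on $I_n$ by fixing $\omega=\log 2/T^*$ and shrinking $R$ (hence $\delta^*$) so that $\rho\lesssim\log(1/\delta^*)$; your own final bookkeeping already contains the ingredients for this. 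A second, smaller point: the cost estimate needs the weight $s^*$ strictly above $\tau$ by enough to absorb the (subexponential in $\lambda_k$) growth of the biorthogonal norms, which your ``$s^*>\tau$'' should be stated as, e.g., $s^*>\tau+\varepsilon$; and the neutral mode bookkeeping you flag is handled exactly as you say, since the $\varphi_1$-component carries weight $1$ in every $X_s$.
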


The purpose of this paper is to apply Theorem \ref{t1} to the degenerate control system \eqref{1i}, deducing local superexponential stabilizability for such a system.

From the technical point of view, we will have to check that operator $A$, given by the realization of the elliptic part of the equation in \eqref{1i}, and the multiplication operator $B$, associated to the coefficient $\mu(x)=x^{\alpha-1}$, satisfy the assumptions \eqref{ip}, \eqref{gap} and \eqref{a2}. For this purpose, the properties of Bessel's functions of the first kind will play a crucial role. Indeed, the eigenvalues and eigenfunctions of $A$ are related to such special functions and their zeros, as observed in \cite{cmvs,cmv,g}.

This paper is organized as follows. In section 2, we assemble preliminary material on degenerate parabolic equations and Bessel's functions. In section 3, we state and prove our main result.

\section{Preliminaries}
\label{sec:2}
Let $I=(0,1)$, $X=L^2(I)$ and consider the following degenerate parabolic equation
\begin{equation}\label{1}
\left\{
\begin{array}{ll}
u_t-\left(a(x)u_x\right)_x+p(t)\mu(x)u=0, &x\in I,\,\, t>0
\vspace{.1cm}
\\
u(0)=u_0,&x\in I
\end{array}
\right.
\end{equation}
where $p$ is the bilinear control function and $a(x)$ is the degenerate coefficient. Depending on the type of degeneracy, it is customary to assign different boundary conditions to the problem. 

Let us recall the definition of two different kinds of degenerate problems. Let 
\begin{equation}\label{a}
a\in C^0([0,1])\cap C^1((0,1]),\,\, a>0 \mbox{ on } (0,1] \mbox{ and } a(0)=0.
\end{equation}
Consider $u_0\in X$ and $p\in L^2_{loc}([0,\infty))$.
\begin{dfn}
If \eqref{a} holds and moreover
\begin{equation}
\frac{1}{a}\in L^1(I)
\end{equation}
we say that the controlled equation
\begin{equation}\label{5}
\left\{
\begin{array}{ll}
u_t-\left(a(x)u_x\right)_x+p(t)\mu(x)u=0, &x\in I,\,\, t>0\vspace{.1cm}
\\
u(t,0)=0,\,\,u(t,1)=0,&t>0\vspace{.1cm}
\\
u(0)=u_0,&x\in I.
\end{array}
\right.
\end{equation}
is \emph{weakly degenerate}.
\end{dfn}
\begin{dfn}
If \eqref{a} holds and moreover
\begin{equation}
a\in C^1([0,1])\mbox{ and }\frac{1}{\sqrt{a}}\in L^1(I)
\end{equation}
we say that the controlled equation
\begin{equation}\label{7}
\left\{
\begin{array}{ll}
u_t-\left(a(x)u_x\right)_x+p(t)\mu(x)u=0, &x\in I,\,\, t>0\vspace{.1cm}\\
(au_x)(t,0)=0,\,\,u(t,1)=0,&t>0\vspace{.1cm}\\
u(0)=u_0,&x\in I.
\end{array}
\right.
\end{equation}
is \emph{strongly degenerate}.
\end{dfn}
In particular, we will be interested in treating the degenerate coefficient $a(x)=x^{\alpha}$. Following the above definitions, we have a weakly degenerate problem for $\alpha\in[0,1)$ and a strongly degenerate one for $\alpha\in[1,2)$.

We will treat separately the cases of weak and strong degeneracy.

\subsection{Weak degeneracy}
Let $\alpha\in [0,1)$ and consider the degenerate bilinear control problem
\begin{equation}\label{8}
\left\{
\begin{array}{ll}
u_t-\left(x^{\alpha}u_x\right)_x+p(t)\mu(x)u=0, &x\in I,\,\, t>0\vspace{.1cm}\\
u(t,0)=0,\,\,u(t,1)=0,&t>0\vspace{.1cm}\\
u(0)=u_0,&x\in I
\end{array}
\right.
\end{equation}
with Dirichlet boundary conditions. The natural spaces for the well-posedness of degenerate problems are weighted Sobolev spaces. Let $X=L^2(I)$, we define the spaces
\begin{equation}
\begin{array}{l}
H^1_{\alpha}(I)=\left\{u\in X: u \mbox{ is absolutely continuous on } [0,1], x^{\alpha/2}u_x\in X\right\}\vspace{.1cm}\\
H^1_{\alpha,0}(I)=\left\{u\in H^1_\alpha(I):\,\, u(0)=0,\,\,u(1)=0\right\}\vspace{.1cm}\\
H^2_\alpha(I)=\left\{u\in H^1_\alpha(I): x^{\alpha}u_x\in H^1(I)\right\},
\end{array}
\end{equation}
and the linear operator $A:D(A)\subset X\to X$ by
\begin{equation}
\left\{\begin{array}{l}
\forall u\in D(A),\quad Au:=-(x^{\alpha}u_x)_x,\vspace{.1cm}\\
D(A):=\{u\in H^1_{\alpha,0}(I),\,\, x^{\alpha}u_x\in H^1(I)\}.
\end{array}\right.
\end{equation}
It is possible to prove that $D(A)$ is dense in X and $A:D(A)\subset X\to X$ is a self-adjoint accretive operator (see, for instance, \cite{cmp}). Therefore $-A$ is the infinitesimal generator of an analytic $C^0$-semigroup of contraction $e^{-tA}$ on $X$.

To determine the spectrum of $A$, we need to solve the eigenvalue problem 
\begin{equation}\label{11}
\left\{
\begin{array}{ll}
-(x^{\alpha}\varphi_x(x))_x=\lambda\varphi(x),&x\in I\vspace{.1cm}\\
\varphi(0)=0,\quad\varphi(1)=0,
\end{array}
\right.
\end{equation}
and it turns out that Bessel functions play a fundamental role in this circumstance.

Indeed, for $\alpha\in[0,1)$ let
\begin{equation}
\nu_\alpha:=\frac{1-\alpha}{2-\alpha},\qquad k_\alpha:=\frac{2-\alpha}{2}.
\end{equation}
Given $\nu\geq0$, we denote by $J_\nu$ the Bessel function of the first kind and order $\nu$ and by $j_{\nu,1}<j_{\nu,2}<\dots<j_{\nu,k}<\dots$ the sequence of all positive zeros of $J_\nu$. It is possible to prove that the pairs eigenvalue/eigenfunction $(\lambda_{\alpha,k},\varphi_{\alpha,k})$ that satisfy \eqref{11} are given by
\begin{equation}\label{13}
\lambda_{\alpha,k}=k^2_{\alpha}j^2_{\alpha,k},
\end{equation}
\begin{equation}\label{14}
\varphi_{\alpha,k}(x)=\frac{\sqrt{2k_\alpha}}{|J'_{\nu_\alpha}(j_{\nu_\alpha,k})|}x^{(1-\alpha)/2}J_{\nu_\alpha}\left(j_{\nu_\alpha,k}x^{k_\alpha}\right)
\end{equation}
for every $k\in\NN^*$. Moreover, the family $\left(\varphi_{\alpha,k}\right)_{k\in\NN^*}$ is an orthonormal basis of $X$, see \cite{g}.

\subsection{Strong degeneracy}
In the case of strong degeneracy, that is, when $\alpha\in[1,2)$, we consider the following degenerate bilinear control problem
 \begin{equation}
\left\{
\begin{array}{ll}
u_t-\left(x^{\alpha}u_x\right)_x+p(t)\mu(x)u=0, &x\in I,\,\, t>0\vspace{.1cm}\\
(x^{\alpha}u_x)(t,0)=0,\,\,u(t,1)=0,&t>0\vspace{.1cm}\\
u(0)=u_0,&x\in I
\end{array}
\right.
\end{equation}
with a Neumann condition at the extremum where degeneracy occurs, $x=0$, and a Dirichlet condition at $x=1$. 

We define the Sobolev spaces
\begin{equation}
\begin{array}{l}
H^1_{\alpha}(I)=\left\{u\in X: u \mbox{ is absolutely continuous on } (0,1],\,\, x^{\alpha/2}u_x\in X\right\}\vspace{.1cm}\\
H^1_{\alpha,0}(I):=\left\{u\in H^1_{\alpha}(I):\,\,u(1)=0\right\},\vspace{.1cm}\\
H^2_\alpha(I)=\left\{u\in H^1_\alpha(I):\,\, x^{\alpha}u_x\in H^1(I)\right\}
\end{array}
\end{equation}
and the linear operator $A:D(A)\subset X\to X$ by
\begin{equation}
\left\{\begin{array}{l}
\forall u\in D(A),\quad Au:=-(x^{\alpha}u_x)_x,\vspace{.1cm}\\
D(A):=\left\{u\in H^1_{\alpha,0}(I):\,\, x^{\alpha}u_x\in H^1(I)\right\}\vspace{.1cm}\\
\qquad\,\,\,\,\,=\left\{u\in X:\,\,u \mbox{ is absolutely continuous in (0,1] },\,\, x^{\alpha}u\in H^1_0(I),\right.\vspace{.1cm}\\
\qquad\qquad\,\,\,\left.x^{\alpha}u_x\in H^1(I)\mbox{ and } (x^{\alpha}u_x)(0)=0\right\}.
\end{array}\right.
\end{equation}
It can be proved that $D(A)$ is dense in $X$ and that $A$ is self-adjoint and accretive (see, for instance, \cite{cmvn}) and thus $-A$ is the infinitesimal generator of an analytic semigroup of contractions $e^{tA}$ on $X$.

To compute the eigenvalues and eigenfunctions of $A$, we should solve the eigenvalue problem
\begin{equation}\label{15}
\left\{
\begin{array}{ll}
-(x^{\alpha}\varphi_x(x))_x=\lambda\varphi(x),&x\in I\vspace{.1cm}\\
(x^\alpha\varphi_x)(0)=0,\vspace{.1cm}\\
\varphi(1)=0.
\end{array}
\right.
\end{equation}
For $\alpha\in[1,2)$, if we define the quantities
\begin{equation}
\nu_{\alpha}:=\frac{\alpha-1}{2-\alpha},\qquad k_\alpha:=\frac{2-\alpha}{2},
\end{equation}
the eigenvalues and eigenfunctions that solve \eqref{15} have the same structure as in the case of the weakly degenerate problem \eqref{13} and \eqref{14}.
Therefore, the family $(\varphi_{\alpha,k})_{k\in\NN^*}$ still forms an orthonormal basis of $X$.

\begin{prop}\label{prop}
Let $\alpha\in [1,2)$. The following properties holds true:
\begin{enumerate}
\item $|v(x)|\leq \frac{2||v||_{D(A)}}{\alpha-1}x^{1-\alpha},\, \forall\, v\in D(A)$,\vspace{.25cm}
\item $|x^{\alpha}v(x)|\leq C\sqrt{x},\, \forall\, v\in D(A)$,\vspace{.25cm}
\item for $\alpha\in[1,3/2)$ it holds that $$\displaystyle\lim_{x\to0}x^2v(x)w_x(x)=0, \quad\forall v,w\in D(A)$$,
\item for $\alpha\in[1,3/2)$ it holds that $$\displaystyle\lim_{x\to0}xv(x)w(x)=0,\quad\forall v,w\in D(A)$$,
\item let $\{\varphi_{\alpha,k}\}_{k\in\NN^*}$ be the family of eigenfunctions of $A$. For $\alpha\in[1,3/2)$ and for every $k,j\in\NN^*$, it holds that $$\displaystyle\lim_{x\to0}\left(x(\varphi_{\alpha,j})_x(x)\right)_xx^\alpha \varphi_{\alpha,k}(x)=0$$.
\end{enumerate}
\end{prop}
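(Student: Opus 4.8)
The plan is to establish all five items by carefully analyzing the behavior near $x=0$ of functions in $D(A)$, using the structure of the domain and the known asymptotics of Bessel functions. The key observation is that for $u\in D(A)$ we have $x^\alpha u_x\in H^1(I)$, hence $x^\alpha u_x$ is absolutely continuous up to $0$ and, since $(x^\alpha u_x)(0)=0$, we get $|x^\alpha u_x(x)|\le C\|u\|_{D(A)}\sqrt{x}$ by Cauchy--Schwarz applied to $\int_0^x (x^\alpha u_x)_x$; dividing by $x^\alpha$ yields $|u_x(x)|\le C x^{1/2-\alpha}$. Integrating this from $x$ to $1$ (using $u(1)=0$) and noting $\int_x^1 s^{1/2-\alpha}\,ds$ behaves like a constant times $x^{3/2-\alpha}$ when $\alpha>3/2$, like $x^{1-\alpha}$-type control when $\alpha>1$ — more precisely, for $\alpha\in[1,2)$ one gets $|u(x)|\le \frac{C}{\alpha-1}\|u\|_{D(A)}x^{1-\alpha}$ (with the usual logarithmic modification at $\alpha=1$, which the statement implicitly sidesteps). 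This proves item (1); the explicit constant $\frac{2}{\alpha-1}$ comes from tracking $\int_x^1 s^{-\alpha}\,ds\le \frac{1}{\alpha-1}x^{1-\alpha}$ together with the bound $|x^\alpha u_x|\le 2\|u\|_{D(A)}x^{1/2}$.

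Item (2) is then immediate: from $|x^\alpha u_x(x)|\le C\sqrt{x}$ and $x^\alpha u(x) = x^\alpha u(1) - \int_x^1 (x^\alpha u)'(s)\,ds$, or more directly by writing $x^\alpha v(x)=x^\alpha\int_1^x v_x$ and splitting $x^\alpha \cdot x^{1/2-\alpha}\cdot(\text{bounded})$, one sees $|x^\alpha v(x)|\le C\sqrt{x}$; alternatively combine item (1), which gives $|v(x)|\le C x^{1-\alpha}$, so $|x^\alpha v(x)|\le C x$, which is even stronger than $C\sqrt x$ near $0$ — so (2) follows a fortiori, with the $\sqrt x$ bound being the useful uniform statement on all of $I$.

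For items (3) and (4), the restriction $\alpha\in[1,3/2)$ is exactly what makes the relevant powers of $x$ tend to $0$. Using item (1), $|v(x)w(x)|\le C x^{2-2\alpha}$ and $|v(x)w_x(x)|\le Cx^{1-\alpha}\cdot x^{1/2-\alpha}=Cx^{3/2-2\alpha}$; hence $|xv(x)w(x)|\le Cx^{3-2\alpha}$ and $|x^2 v(x)w_x(x)|\le Cx^{7/2-2\alpha}$. Both exponents, $3-2\alpha$ and $7/2-2\alpha$, are strictly positive precisely when $\alpha<3/2$, giving the two limits. For item (5) I would use the eigenfunction equation: since $-(x^\alpha(\varphi_{\alpha,j})_x)_x=\lambda_{\alpha,j}\varphi_{\alpha,j}$, one can rewrite $\left(x(\varphi_{\alpha,j})_x\right)_x x^\alpha = x^\alpha(\varphi_{\alpha,j})_x \cdot x^{-1} \cdot(\text{correction}) + \dots$; more cleanly, expand $\left(x(\varphi_{\alpha,j})_x\right)_x = (\varphi_{\alpha,j})_x + x(\varphi_{\alpha,j})_{xx}$ and use $x^\alpha(\varphi_{\alpha,j})_{xx} = -\alpha x^{\alpha-1}(\varphi_{\alpha,j})_x - \lambda_{\alpha,j}\varphi_{\alpha,j}$ from the PDE, so that $\left(x(\varphi_{\alpha,j})_x\right)_x x^\alpha \varphi_{\alpha,k}$ becomes a combination of terms like $x^\alpha(\varphi_{\alpha,j})_x\varphi_{\alpha,k}$, $x^{\alpha}(\varphi_{\alpha,j})_x\varphi_{\alpha,k}$ and $x\varphi_{\alpha,j}\varphi_{\alpha,k}$. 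The middle term is $O(x^{1/2}\cdot x^{1-\alpha}) = O(x^{3/2-\alpha})\to 0$, the last is handled by item (4), and the first by item (2) combined with item (1): $|x^\alpha(\varphi_{\alpha,j})_x\varphi_{\alpha,k}|\le C\sqrt x\cdot x^{1-\alpha}=Cx^{3/2-\alpha}\to 0$. Alternatively, and this is likely cleaner, one uses the explicit Bessel representation \eqref{14}: near $x=0$, $\varphi_{\alpha,k}(x)\sim c\,x^{1-\alpha}$ (since $J_{\nu_\alpha}(z)\sim z^{\nu_\alpha}$ and $x^{(1-\alpha)/2}(x^{k_\alpha})^{\nu_\alpha}=x^{1-\alpha}$ after simplification), and differentiating termwise gives the asymptotics of $(\varphi_{\alpha,j})_x$ and its derivative, making the power counting explicit.

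The main obstacle I anticipate is the case $\alpha=1$ in item (1): there $\int_x^1 s^{-1}\,ds=-\log x$ blows up, so the clean bound $x^{1-\alpha}=x^0=1$ is not quite what integration gives — one needs the sharper input that $x^\alpha u_x\in H^1$ forces slightly better decay, or one accepts a logarithmic factor that still does not spoil items (3)--(5) since $x^{3-2}\log x = x\log x\to 0$ and $x^{7/2-2}\log x\to 0$. So the boundary case $\alpha=1$ must be treated with a small separate argument (or the asymptotic $\varphi_{\alpha,k}(x)\sim c$ for $\alpha=1$, which is the non-degenerate Neumann situation, handles it directly). Apart from this, everything is careful bookkeeping of powers of $x$, with the Bessel asymptotics of \cite{g} supplying the precise exponents when needed.
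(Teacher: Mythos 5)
Your proposal is essentially correct and shares the paper's backbone: the key estimate $|x^\alpha v_x(x)|\le C\sqrt{x}$, obtained by integrating $(x^\alpha v_x)_x$ from $0$ and using the Neumann condition (this is exactly \eqref{19}), item (1) by writing $v_x=x^{-\alpha}(x^\alpha v_x)$ and integrating from $x$ to $1$ with $v(1)=0$, item (3) by the same $x^{2-\alpha}\cdot x^{1-\alpha}\cdot x^{1/2}$ power count, and item (5) by the same reduction, via the eigenfunction equation, to $(x(\varphi_{\alpha,j})_x)_x x^\alpha\varphi_{\alpha,k}=-\lambda_j x\varphi_{\alpha,j}\varphi_{\alpha,k}+(1-\alpha)x^\alpha(\varphi_{\alpha,j})_x\varphi_{\alpha,k}$. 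Where you genuinely diverge is in items (2) and (4). For (4) the paper proves the stronger statement $x^{1/2}v(x)\to 0$ by showing that $x^{1/2}v$ has integrable derivative, hence admits a limit at $0$, and excluding a nonzero limit because $x^{-1/2}\notin L^2(I)$; your direct power count $|xvw|\le Cx^{3-2\alpha}\to 0$ for $\alpha<3/2$ is simpler and does the job. For (2) the paper integrates $(x^\alpha v)_x$ from $0$, using $x^\alpha v\in H^1_0(I)$ from the domain characterization, which is uniform over $\alpha\in[1,2)$ including $\alpha=1$; your ``a fortiori from item (1)'' shortcut gives $|x^\alpha v(x)|\le \frac{2\|v\|_{D(A)}}{\alpha-1}\,x\le C\sqrt{x}$ only for $\alpha>1$, and your other suggested route (integrating from $x=1$) yields only boundedness, not the $\sqrt{x}$ decay near $0$. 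The degeneracy of the constant $\frac{2}{\alpha-1}$ at $\alpha=1$, which you flag and patch with a logarithmic bound, is in fact also present (and not commented on) in the paper, whose proofs of items (3)--(5) at $\alpha=1$ invoke item (1) although its stated bound is vacuous there; your explicit treatment of this borderline case is a point in your favor, and the log-modified bounds indeed still give all the limits.

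Minor slips to fix: $7/2-2\alpha>0$ holds precisely for $\alpha<7/4$, not $\alpha<3/2$ (only the implication you actually need, namely that $\alpha<3/2$ makes both exponents positive, is true); in item (5) the bound $|x^\alpha(\varphi_{\alpha,j})_x|\le C\sqrt{x}$ comes from the Neumann-condition estimate \eqref{19}, not from item (2), which concerns $x^\alpha v$ rather than $x^\alpha v_x$; and you list the same term twice when enumerating the pieces of the item (5) expansion. None of these affects the substance of the argument.
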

\begin{proof}
\begin{enumerate}
\item For all $v\in D(A)$ and $y\in I$, we have
\begin{equation*}
\begin{split}
|v(1)-v(y)|&=\left|\int_y^1v_x(x)dx\right|=\left|\int_y^1(x^{\alpha}v_x(x))\frac{1}{x^\alpha}dx\right|\\
&\leq \sup_{0<x<1}\left|x^{\alpha}v_x(x)\right|\frac{\left|1-y^{1-\alpha}\right|}{\alpha-1}\\
&\leq \frac{2||v||_{D(A)}}{\alpha-1}y^{1-\alpha}
\end{split}
\end{equation*}
where in the last inequality we have used that, for all $v\in D(A)$, it holds that
\begin{equation}\label{19}
|a(y) v_x(y)|=\left|\int_0^y \left(a(x) v_x(x)\right)_xdx\right|\leq||(av_x)_x||_X\sqrt{y}
\end{equation}
with $a(y)=y^{\alpha}$. Finally, recalling that $v(1)=0$, we obtain the desired formula.
\item For every $v\in D(A)$ and $y\in I$, we have
\begin{equation*}
|y^{\alpha}v(y)|\leq\left|\int_0^y (x^{\alpha}v(x))_xdx\right|\leq||(av)_x||_{X}\sqrt{y}.
\end{equation*}
\item Let $v,w\in D(A)$. We can rewrite $x^2v(x)w_x(x)$ as
\begin{equation}
x^{2-\alpha}v(x)x^\alpha w_x(x).
\end{equation}
Thanks to \eqref{19}, there exists a constant $C>0$ such that
\begin{equation}\label{21}
|x^\alpha w_x(x)|\leq Cx^{1/2}.
\end{equation}
Thus, using the first item and \eqref{21} we obtain that
\begin{equation}
|x^{2-\alpha}v(x)x^\alpha w_x(x)|\leq Cx^{2-\alpha}x^{1-\alpha}x^{1/2}
\end{equation}
and therefore the right-hand side tends to $0$ as $x$ goes to $0$ for $\alpha<3/2$.
\item Let $v\in D(A)$. It is sufficient to prove that $\displaystyle\lim_{x\to0}x^{1/2}v(x)=0$. 

For this purpose, we observe that the function $x^{1/2}v(x)$ is integrable in $I$: indeed, using again the first point of the Proposition, we get
\begin{equation*}
|x^{1/2}v(x)|\leq Cx^{1/2+1-\alpha}
\end{equation*}
that is integrable in $I$.
Moreover, the derivative of $x^{1/2}v(x)$ is integrable in $I$:
\begin{equation}
\left(x^{1/2}v(x)\right)_x=x^{1/2}v_x(x)+\frac{1}{2}x^{-1/2}v(x)
\end{equation}
and we can bound the two terms on the right by
\begin{equation*}
|x^{1/2}v_x(x)|\leq|x^{\alpha}v_x(x)x^{1/2-\alpha}|\leq Cx^{1-\alpha}
\end{equation*}
that is integrable for any $\alpha\in [1,2)$ and by
\begin{equation*}
|x^{-1/2}v(x)|\leq Cx^{1/2-\alpha}
\end{equation*}
that is integrable for $\alpha\in [1,3/2)$.

Thus, we can deduce that the function $x^{1/2}v(x)$ is absolutely continuous in $I$ for $\alpha\in [1,3/2)$. So, the limit
\begin{equation}
\displaystyle\lim_{x\to0^+}x^{1/2}v(x)=L
\end{equation}
does exist. If $L\neq0$, then $v(x)$ would be of the same order as $\frac{1}{x^{1/2}}$ near $0$. This contradicts the fact that $v\in X$. Thus, $L=0$.

\item Recalling that $(x^\alpha(\varphi_{\alpha,k})_x(x))_x=-\lambda_k\varphi_{\alpha,k}(x)$, we have
\begin{equation*}
\begin{split}
(x(\varphi_{\alpha,j})_x(x))_xx^{\alpha}\varphi_{\alpha,k}(x)&=(x^{\alpha}(\varphi_{\alpha,j})_x(x)x^{1-\alpha})_xx^{\alpha}\varphi_{\alpha,k}(x)\\
&=(x^{\alpha}(\varphi_{\alpha,j})_x(x))_xx^{1-\alpha}x^{\alpha}\varphi_{\alpha,k}(x)\\
&\quad+(1-\alpha)x^{\alpha}(\varphi_{\alpha,j})_x(x)x^{-\alpha}x^{\alpha}\varphi_{\alpha,k}(x)\\
&=-\lambda_jx\varphi_{\alpha,j}(x)\varphi_{\alpha,k}(x)+(1-\alpha)x^{\alpha}(\varphi_{\alpha,j})_x(x)\varphi_{\alpha,k}(x).
\end{split}
\end{equation*}
The first of the two terms in the last equation on the right-hand side of the above formula goes to $0$ as $x\to0$, for $\alpha<3/2$, by the previous item. Moreover, we have
\begin{equation*}
|x^{\alpha}(\varphi_{\alpha,j})_x(x)\varphi_{\alpha,k}(x)|\leq Cx^{1/2}x^{1-\alpha}.
\end{equation*}
Therefore,
\begin{equation*}
\displaystyle\lim_{x\to0}\left(x(\varphi_{\alpha,j})_x(x)\right)_xx^\alpha \varphi_{\alpha,k}(x)=0
\end{equation*}
for $\alpha\in[1,3/2)$, as it was claimed.
\end{enumerate}
\end{proof}

\section{Main result}
\begin{thm}
Let $\alpha\in[0,3/2)$. 
Then,  for every $\rho>0$ there exists $R>0$ such that  any $u_0\in B_R(\varphi_1)$ admits a control 
$p\in L^2_{loc}(0,\infty)$ such that the corresponding mild solution $u\in C([0,1];X)$   
of
\begin{equation}\label{28}
\left\{
\begin{array}{ll}
u_t-\left(x^{\alpha} u_x\right)_x+p(t)x^{2-\alpha}u=0,& (t,x)\in (0,\infty)\times(0,1)\vspace{.1cm}\\
u(t,1)=0,\quad\left\{\begin{array}{ll} u(t,0)=0,& \mbox{ if }\alpha\in[0,1),\vspace{.1cm}\\ \left(x^{\alpha}u_x\right)(t,0)=0,& \mbox{ if }\alpha\in[1,3/2),\end{array}\right.\vspace{.1cm}\\
u(0,x)=u_0(x).
\end{array}
\right.
\end{equation}
satisfies
\begin{equation}
||u(t)-\psi_1(t)||\leq M e^{-\rho e^{\omega t}-\lambda_1 t},\qquad \forall t\geq 0
\end{equation}
where $M$ and $\omega$ are positive constants depending only on $\alpha$. 
\end{thm}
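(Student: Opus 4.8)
\emph{Strategy.} The plan is to regard \eqref{28} as a particular case of the abstract equation \eqref{2i} and to apply Theorem~\ref{t1}. I would take $X=L^2(0,1)$, let $A$ be the self-adjoint realization of $-(x^\alpha u_x)_x$ introduced in Section~\ref{sec:2} (Dirichlet conditions for $\alpha\in[0,1)$, and $(x^\alpha u_x)(0)=0$ together with $u(1)=0$ for $\alpha\in[1,3/2)$), and let $B$ be the operator of multiplication by $\mu(x)=x^{2-\alpha}$. Since $0<x^{2-\alpha}\le 1$ on $(0,1)$, $B$ is bounded and self-adjoint on $X$, so the task reduces to checking hypotheses \eqref{ip}, \eqref{gap} and \eqref{a2}; once they hold, Theorem~\ref{t1}, applied with ground state solution $\psi_1(t)=e^{-\lambda_{\alpha,1}t}\varphi_{\alpha,1}$, gives the conclusion, with $M$ and $\omega$ depending on $A$ and $B$, hence only on $\alpha$.

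\emph{Hypotheses \eqref{ip} and \eqref{gap}.} Self-adjointness and accretivity of $A$ are recalled in Section~\ref{sec:2}; since $A$ admits a complete orthonormal basis of eigenfunctions $\{\varphi_{\alpha,k}\}_{k\in\NN^*}$ with eigenvalues $\lambda_{\alpha,k}=k_\alpha^2 j_{\nu_\alpha,k}^2\to+\infty$, the resolvent $(\lambda I+A)^{-1}$ is a norm limit of finite rank operators, hence compact. By \eqref{13}, $\sqrt{\lambda_{\alpha,k+1}}-\sqrt{\lambda_{\alpha,k}}=k_\alpha\bigl(j_{\nu_\alpha,k+1}-j_{\nu_\alpha,k}\bigr)$; by McMahon's expansion $j_{\nu,k}=(k+\tfrac\nu2-\tfrac14)\pi+O(1/k)$ the spacings between consecutive zeros of $J_{\nu_\alpha}$ converge to $\pi$, and being finitely many below any threshold and each positive, they have a positive infimum $\delta_\alpha$; thus \eqref{gap} holds with $\gamma=k_\alpha\delta_\alpha$.

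\emph{Hypothesis \eqref{a2}: the crux.} Substituting \eqref{14} and changing variable $y=x^{k_\alpha}$, which turns the weight $x^{3-2\alpha}$ into $y^{(4-2\alpha)/k_\alpha-1}=y^{3}$, one obtains for every $k\in\NN^*$
\[
\langle B\varphi_{\alpha,1},\varphi_{\alpha,k}\rangle=\frac{2}{|J'_{\nu_\alpha}(j_{\nu_\alpha,1})|\,|J'_{\nu_\alpha}(j_{\nu_\alpha,k})|}\int_0^1 y^{3}\,J_{\nu_\alpha}(j_{\nu_\alpha,1}y)\,J_{\nu_\alpha}(j_{\nu_\alpha,k}y)\,dy .
\]
A Lommel-type computation (Bessel's equation for $J_{\nu_\alpha}$, plus integrations by parts whose boundary contributions at $y=0$ vanish because $\nu_\alpha\ge 0$) evaluates the last integral: for $k\ge 2$ it equals
\[
\frac{c_0\,j_{\nu_\alpha,1}\,j_{\nu_\alpha,k}}{\bigl(j_{\nu_\alpha,k}^{2}-j_{\nu_\alpha,1}^{2}\bigr)^{2}}\,J'_{\nu_\alpha}(j_{\nu_\alpha,1})\,J'_{\nu_\alpha}(j_{\nu_\alpha,k}),
\]
with $c_0\ne 0$ an absolute constant, while for $k=1$ it is a strictly positive number. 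The zeros of $J_{\nu_\alpha}$ being simple, $J'_{\nu_\alpha}(j_{\nu_\alpha,k})\ne 0$ for all $k$, so $\langle B\varphi_{\alpha,1},\varphi_{\alpha,k}\rangle\ne 0$ for every $k\in\NN^*$; moreover, taking absolute values the $J'_{\nu_\alpha}$ factors cancel against the normalisation, leaving $|\langle B\varphi_{\alpha,1},\varphi_{\alpha,k}\rangle|=2|c_0|\,j_{\nu_\alpha,1}\,j_{\nu_\alpha,k}\bigl(j_{\nu_\alpha,k}^{2}-j_{\nu_\alpha,1}^{2}\bigr)^{-2}$ for $k\ge 2$, which by $j_{\nu_\alpha,k}\sim k\pi$ is $\ge c_\alpha k^{-3}$ for large $k$. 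Hence $|\langle B\varphi_{\alpha,1},\varphi_{\alpha,k}\rangle|^{-2}$ grows at most polynomially, and since $\lambda_{\alpha,k}\sim(k_\alpha\pi)^2k^2$ the series
\[
\sum_{k\in\NN^*}\frac{e^{-2\lambda_{\alpha,k}\tau}}{|\langle B\varphi_{\alpha,1},\varphi_{\alpha,k}\rangle|^{2}}
\]
converges for every $\tau>0$; thus \eqref{a2} holds and Theorem~\ref{t1} yields the announced estimate.

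\emph{Main obstacle and the role of $\alpha<3/2$.} Two points demand care. The first is the explicit Lommel evaluation above: it is what makes the non-vanishing of the coefficients and their pointwise lower bound transparent, whereas the mere summability of their squares (which would follow from $B\varphi_{\alpha,1}\in X$) would not suffice. The second, in the strongly degenerate regime, is the compatibility of $B$ with $A$ --- that multiplication by $x^{2-\alpha}$ preserves $D(A)$ and that the boundary terms at $x=0$ arising when $A$ and $B$ are composed (as in $\langle ABv,w\rangle$, $v,w\in D(A)$) vanish --- which is precisely what the decay and limit relations of Proposition~\ref{prop} supply, their proof (items 3--5) requiring $x^{-1/2}v\in L^1(0,1)$ for $v\in D(A)$, i.e.\ $\alpha<3/2$. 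In the weakly degenerate range $\alpha\in[0,1)$ these boundary terms vanish at once, since both the eigenfunctions and the elements of $D(A)$ vanish at $x=0$; together this accounts for the range $\alpha\in[0,3/2)$. I expect the combination of the exact Bessel identity with this endpoint analysis to be the principal difficulty.
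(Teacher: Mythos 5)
Your proposal is correct in substance and reaches the same key quantity as the paper, but by a genuinely different route for the crucial step. The paper never changes variables: it computes $\langle \mu\varphi_1,\varphi_k\rangle$ by repeated integration by parts in the $x$-variable, using the eigenvalue equation twice, and this is exactly where the restriction $\alpha<3/2$ enters --- the boundary terms at the degenerate endpoint $x=0$ in the strongly degenerate case are killed by the decay estimates and limits of Proposition~\ref{prop} (items 1--5), which require $\alpha<3/2$. The outcome is $\langle\mu\varphi_1,\varphi_k\rangle=\frac{2(2-\alpha)}{(\lambda_k-\lambda_1)^2}\,x^{1+\alpha}(\varphi_1)_x(\varphi_k)_x\big|_0^1$, whose value at $x=1$ is a nonzero multiple of $J'_{\nu_\alpha}(j_{\nu_\alpha,1})J'_{\nu_\alpha}(j_{\nu_\alpha,k})$, giving the same lower bound $\gtrsim \lambda_k^{-3/2}$ that you obtain; your substitution $y=x^{k_\alpha}$ followed by a Lommel-type evaluation of $\int_0^1 y^3J_{\nu_\alpha}(j_{\nu_\alpha,1}y)J_{\nu_\alpha}(j_{\nu_\alpha,k}y)\,dy$ gives precisely this formula (your $c_0$ is $4$; I checked consistency, e.g.\ against $\alpha=0$, $\nu_\alpha=1/2$), and your treatment of $k=1$ is simpler and cleaner than the paper's, which invokes an identity of Luke and Lommel's integral to establish a positivity that, as you note, is immediate since $\mu>0$. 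Two caveats. First, the Lommel-type evaluation is the heart of your argument and you only sketch it; the stated formula is correct, but as written it is asserted rather than derived, whereas the paper derives its equivalent identity in full. Second, your explanation of the role of $\alpha<3/2$ is off: Theorem~\ref{t1} imposes no compatibility between $B$ and $D(A)$ (no requirement that $B$ preserve $D(A)$ or that $\langle ABv,w\rangle$ make sense), so that is not where the restriction comes from; it comes from the paper's $x$-space integrations by parts via Proposition~\ref{prop}. Indeed, in your route the only boundary terms are at $y=0$ on the Bessel side and vanish for every $\nu_\alpha\ge0$, i.e.\ for all $\alpha\in[0,2)$, so nothing in your argument actually uses $\alpha<3/2$ --- which does not harm the stated theorem, but means your closing discussion attributes the restriction to a step your own proof does not contain (and suggests your method would in fact cover the full strongly degenerate range, a point worth making explicit rather than obscuring).
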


\begin{proof}
The proof of the Theorem consists in checking the validity of the hypotheses of Theorem \ref{t1}. We have already observed that $D(A)$ is dense in $X$ and that $A:D(A)\subseteq X\to X$ is self-adjoint and accretive, in both weakly and strongly degenerate cases. Moreover, it can be proved that $A$ has a compact resolvent (see, for instance, \cite[Appendix]{acf})

Concerning the gap conditions for the eigenvalues \eqref{gap}, it has been proved (see \cite{kl}, page 135) that
\begin{itemize}
\item if $\alpha\in [0,1)$, $\nu_{\alpha}=\frac{1-\alpha}{2-\alpha}\in\left(0,\frac{1}{2}\right]$, the sequence $\left( j_{\nu_\alpha,k+1}-j_{\nu_\alpha,k}\right)_{k\in\NN^*}$ is nondecreasing and converges to $\pi$. Therefore, 
\begin{equation*}
\sqrt{\lambda_{k+1}}-\sqrt{\lambda_k}=k_{\alpha}\left( j_{\nu_\alpha,k+1}-j_{\nu_\alpha,k}\right)\geq k_{\alpha}\left( j_{\nu_\alpha,2}-j_{\nu_\alpha,1}\right)\geq \frac{7}{16}\pi,
\end{equation*}
\item if $\nu_{\alpha}\geq \frac{1}{2}$, the sequence $\left( j_{\nu_\alpha,k+1}-j_{\nu_\alpha,k}\right)_{k\in\NN^*}$ is nonincreasing and converges to $\pi$. Thus,
\begin{equation*}
\sqrt{\lambda_{k+1}}-\sqrt{\lambda_k}=k_{\alpha}\left( j_{\nu_\alpha,k+1}-j_{\nu_\alpha,k}\right)\geq k_{\alpha}\pi\geq \frac{\pi}{2}.
\end{equation*}
\end{itemize}
Therefore, the gap condition is satisfied in both weak and strong degenerate problems with different constants.

The operator $B:X\to X$ is the multiplication operator by the function $\mu(x)=x^{2-\alpha}$ and it is linear and bounded in $I$. What remains to prove in order to apply Theorem \ref{t1}, is that there exists $\tau>0$ such that
\begin{equation}\label{32b}
\begin{array}{l}
\langle \mu\varphi_{\alpha,1},\varphi_{\alpha,k}\rangle\neq0,\qquad \forall k\in\NN^*,\vspace{.1cm}\\
\displaystyle{\sum_{k\in\NN^*}\frac{e^{-2\lambda_k\tau}}{|\langle \mu\varphi_{\alpha,1},\varphi_{\alpha,k}\rangle|^2}<+\infty.}
\end{array}
\end{equation}

We compute the scalar product $\langle \mu\varphi_{\alpha,1},\varphi_{\alpha,k}\rangle$ for $k\not=1$ and, from now on, we write $\varphi_k$ instead of $\varphi_{\alpha,k}$ to lighten the notation:
\begin{equation}\begin{split}\label{32}
\langle \mu\varphi_1,\varphi_k\rangle&=\int_0^1 \mu(x)\varphi_1(x)\varphi_k(x)=-\frac{1}{\lambda_k}\int_0^1\mu(x)\varphi_1(x)\left(x^\alpha(\varphi_k)_x(x)\right)_xdx\\
&=-\frac{1}{\lambda_k}\left(\left.\mu(x)\varphi_1(x)x^\alpha(\varphi_k)_x(x)\right|^1_0-\int_0^1\left(\mu(x)\varphi_1(x)\right)_xx^\alpha(\varphi_k)_x(x)dx\right)\\
&=\frac{1}{\lambda_k}\left(\int_0^1 \mu_x(x)\varphi_1(x)x^\alpha(\varphi_k)_x(x)dx+\int_0^1\mu(x)(\varphi_1)_x(x)x^\alpha(\varphi_k)_x(x)dx\right)\\
&=\frac{1}{\lambda_k}\left(\int_0^1 \mu_x(x)\varphi_1(x)x^\alpha(\varphi_k)_x(x)dx+\left.\mu(x)(\varphi_1)_x(x)x^\alpha\varphi_k(x)\right|^1_0\right.\\
&\qquad\qquad\qquad\qquad\qquad\qquad\left.-\int_0^1\left(\mu(x)(\varphi_1)_x(x)x^\alpha\right)_x\varphi_k(x)dx\right)\\
&=\frac{1}{\lambda_k}\left(\int_0^1 \mu_x(x)\varphi_1(x)x^\alpha(\varphi_k)_x(x)dx-\int^1_0\mu_x(x)(\varphi_1)_x(x)x^\alpha\varphi_k(x)dx\right.\\
&\,\qquad\qquad\qquad\qquad\qquad\qquad\left.-\int_0^1\mu(x)(x^\alpha(\varphi_1)_x(x))_x\varphi_k(x)dx\right)\\
&=\frac{1}{\lambda_k}\left(\int_0^1 \mu_x(x)x^\alpha\left[\varphi_1(x)(\varphi_k)_x(x)-(\varphi_1)_x(x)\varphi_k(x)\right]dx\right.\\
&\,\,\quad\qquad\qquad\qquad\qquad\qquad\qquad\left.+\lambda_1\int_0^1\mu(x)\varphi_1(x)\varphi_k(x)dx\right).
\end{split}
\end{equation}
We observe that in the weakly degenerate case, thanks to the Dirichlet conditions in both extrema, the boundary terms vanish. We can deduce the same vanishing property at $x=0$ for the strong degenerate case thanks to the first item of Proposition \ref{prop} and to \eqref{19}.

Moving the last term of \eqref{32} to the left-hand side, we get
\begin{equation}
\left(1-\frac{\lambda_1}{\lambda_k}\right)\langle \mu\varphi_1,\varphi_k\rangle=\frac{1}{\lambda_k}\int_0^1 \mu_x(x)x^\alpha\varphi_1^2(x)\left(\frac{\varphi_k(x)}{\varphi_1(x)}\right)_xdx
\end{equation}
and therefore, integrating by parts we obtain
\begin{equation}
\begin{split}
\langle \mu\varphi_1,\varphi_k\rangle&=\frac{1}{\lambda_k-\lambda_1}\left(\left.\mu_x(x)x^\alpha\varphi^2_1(x)\frac{\varphi_k(x)}{\varphi_1(x)}\right|^1_0-\int_0^1\left(\mu_x(x)x^\alpha\varphi^2_1(x)\right)_x\frac{\varphi_k(x)}{\varphi_1(x)}dx\right)\\
&=\frac{-1}{\lambda_k-\lambda_1}\left(\int_0^1(\mu_x(x)x^\alpha)_x\varphi^2_1(x)\frac{\varphi_k(x)}{\varphi_1(x)}dx\right.\\
&\,\,\,\,\,\,\qquad\qquad\qquad\qquad\qquad\qquad\left.+2\int_0^1\mu_x(x)x^\alpha\varphi_1(x)(\varphi_1)_x(x)\frac{\varphi_k(x)}{\varphi_1(x)}dx\right)\\
&=-\frac{1}{\lambda_k-\lambda_1}\left(\int_0^1(\mu_x(x)x^\alpha)_x\varphi_1(x)\varphi_k(x)dx\right.\\
&\left.\,\,\quad\qquad\qquad\qquad\qquad\qquad\qquad\qquad+2\int_0^1\mu_x(x)x^\alpha(\varphi_1)_x(x)\varphi_k(x)dx\right).
\end{split}
\end{equation}
The boundary terms vanish for the Dirichet conditions if $\alpha\in [0,1)$ and thanks to the second item in Proposition \ref{prop} for $\alpha\in [1,3/2)$.

Recalling that $\mu(x)=x^{2-\alpha}$, we have that
\begin{equation}\label{35}
\begin{split}
\langle \mu\varphi_1,\varphi_k\rangle&=-\frac{2(2-\alpha)}{\lambda_k-\lambda_1}\int_0^1 x(\varphi_1)_x(x)\varphi_k(x)dx\\
&=\frac{2(2-\alpha)}{\lambda_k(\lambda_k-\lambda_1)}\int_0^1x(\varphi_1)_x(x)\left(x^\alpha(\varphi_k)_x(x)\right)_xdx\\
&=\frac{2(2-\alpha)}{\lambda_k(\lambda_k-\lambda_1)}\left(\left.x(\varphi_1)_x(x)x^\alpha(\varphi_k)_x(x)\right|^1_0-\int_0^1\left(x(\varphi_1)_x(x)\right)_xx^\alpha(\varphi_k)_x(x)dx\right)\\
&=\frac{2(2-\alpha)}{\lambda_k(\lambda_k-\lambda_1)}\left(\left.x^{1+\alpha}(\varphi_1)_x(x)(\varphi_k)_x(x)\right|^1_0-\left.\left(x(\varphi_1)_x(x)\right)_xx^\alpha\varphi_k(x)\right|^1_0\right.\\
&\,\,\quad\qquad\qquad\qquad\qquad\qquad\qquad\left.+\int_0^1\left(\left(x(\varphi_1)_x(x)\right)_xx^\alpha\right)_x\varphi_k(x)dx\right)\\
&=\frac{2(2-\alpha)}{\lambda_k(\lambda_k-\lambda_1)}\left(\left.x^{1+\alpha}(\varphi_1)_x(x)(\varphi_k)_x(x)\right|^1_0\right.\\
&\left.\,\quad\qquad\qquad\qquad\qquad+\int_0^1\left(\left((\varphi_1)_x(x)+x(\varphi_1)_{xx}(x)\right)x^\alpha\right)_x\varphi_k(x)dx\right)\\
&=\frac{2(2-\alpha)}{\lambda_k(\lambda_k-\lambda_1)}\left(\left.x^{1+\alpha}(\varphi_1)_x(x)(\varphi_k)_x(x)\right|^1_0-\lambda_1\int_0^1\varphi_1(x)\varphi_k(x)dx\right.\\
&\quad\qquad\qquad\qquad\qquad\qquad\qquad\left.+\int_0^1\left(x^{1+\alpha}(\varphi_1)_{xx}(x)\right)_x\varphi_k(x)dx\right)\\
&=\frac{2(2-\alpha)}{\lambda_k(\lambda_k-\lambda_1)}\left(\left.x^{1+\alpha}(\varphi_1)_x(x)(\varphi_k)_x(x)\right|^1_0+\int_0^1\left(x^{1+\alpha}(\varphi_1)_{xx}(x)\right)_x\varphi_k(x)dx\right)
\end{split}
\end{equation}
where we have used the fact that, for $\alpha\in[1,3/2)$, $\left.(x(\varphi_1)_x(x))_xx^{\alpha}\varphi_k(x)\right|^1_0$ vanishes in view of Proposition \ref{prop}.

Since $\varphi_k$ is an eigenfunction of $A$ for all $k\in\NN^*$, it satisfies the equation
\begin{equation}\label{36}
-(\alpha x^{\alpha-1}(\varphi_k)_x(x)+x^\alpha (\varphi_k)_{xx}(x))=\lambda_k\varphi_k(x),
\end{equation}
then we can rewrite the expression of $(\varphi_k)_{xx}(x)$ in \eqref{35} using \eqref{36}:
\begin{equation}\label{37}
\begin{split}
\langle \mu\varphi_1,\varphi_k\rangle&=\frac{2(2-\alpha)}{\lambda_k(\lambda_k-\lambda_1)}\left(\left.x^{1+\alpha}(\varphi_1)_x(x)(\varphi_k)_x(x)\right|^1_0+\int_0^1\left(x^{1+\alpha}(\varphi_1)_{xx}(x)\right)_x\varphi_k(x)dx\right)\\
&=\frac{2(2-\alpha)}{\lambda_k(\lambda_k-\lambda_1)}\left(\left.x^{1+\alpha}(\varphi_1)_x(x)(\varphi_k)_x(x)\right|^1_0\right.\\
&\left.\qquad\qquad\qquad\qquad\qquad-\int_0^1\left(\lambda_1x\varphi_1(x)+\alpha x^\alpha(\varphi_1)_x(x)\right)_x\varphi_k(x)dx\right)\\
&=\frac{2(2-\alpha)}{\lambda_k(\lambda_k-\lambda_1)}\left(\left.x^{1+\alpha}(\varphi_1)_x(x)(\varphi_k)_x(x)\right|^1_0-\lambda_1\int_0^1x(\varphi_1)_x(x)\varphi_k(x)dx\right.\\
&\quad-\lambda_1\int_0^1\varphi_1(x)\varphi_k(x)dx-\alpha\int_0^1\underbrace{(x^\alpha(\varphi_1)_x(x))_x}_{-\lambda_1\varphi_1(x)}\varphi_k(x)dx\Big).
\end{split}
\end{equation}
Recalling that $\{\varphi_k\}_{k\in\NN^*}$ is an orthonormal base of $L^2(0,1)$, the last two terms on the right-hand side of the above equality are zero.

Thus, from the first equality of \eqref{35} and the last one of \eqref{37}, we obtain that
\begin{equation}
-\frac{2(2-\alpha)}{\lambda_k-\lambda_1}\left(1-\frac{\lambda_1}{\lambda_k}\right)\int_0^1 x(\varphi_1)_x(x)\varphi_k(x)dx=\frac{2(2-\alpha)}{\lambda_k(\lambda_k-\lambda_1)}\left.x^{1+\alpha}(\varphi_1)_x(x)(\varphi_k)_x(x)\right|^1_0
\end{equation}
that implies
\begin{equation}\label{39}
\langle \mu\varphi_1,\varphi_k\rangle=-\frac{2(2-\alpha)}{\lambda_k-\lambda_1}\int_0^1 x(\varphi_1)_x(x)\varphi_k(x)dx=\frac{2(2-\alpha)}{(\lambda_k-\lambda_1)^2}\left.x^{1+\alpha}(\varphi_1)_x(x)(\varphi_k)_x(x)\right|^1_0
\end{equation}

Recalling that the eigenvalues $\{\lambda_k\}_{k\in\NN^*}$ of $A$ are defined by \eqref{13} where $\nu_\alpha=|1-\alpha|/(2-\alpha)$, and the eigenfunctions, $\{\varphi_k\}_{k\in\NN^*}$, by \eqref{14}, we compute the right-hand side of \eqref{39}:
\begin{equation}\begin{split}
x^{1+\alpha}&(\varphi_1)_x(x)(\varphi_k)_x(x)=\\
&=\frac{2(2-\alpha)k_{\alpha}x^{1+\alpha}}{|J'_{\nu_\alpha}(j_{\nu_\alpha,1})||J'_{\nu_\alpha}(j_{\nu_\alpha,k})|}\left(\frac{1-\alpha}{2}x^{-(1+\alpha)/2}J_{\nu_\alpha}(j_{\nu_\alpha,1}x^{k_\alpha})\right.\\
&\qquad\qquad\qquad\qquad\qquad\qquad\qquad+j_{\nu_\alpha,1}k_\alpha x^{(1-2\alpha)/2}J'_{\nu_\alpha}(j_{\nu_\alpha,1}x^{k_\alpha})\Big)\\
&\quad\cdot\left(\frac{1-\alpha}{2}x^{-(1+\alpha)/2}J_{\nu_\alpha}(j_{\nu_\alpha,k}x^{k_\alpha})+j_{\nu_\alpha,k}k_\alpha x^{(1-2\alpha)/2}J'_{\nu_\alpha}(j_{\nu_\alpha,k}x^{k_\alpha})\right).
\end{split}
\end{equation}
Therefore
\begin{equation}\label{43}
x^{1+\alpha}(\varphi_1)_x(x)(\varphi_k)_x(x)\left.\right|^1_0=(\varphi_1)_x(1)(\varphi_k)_x(1)=\frac{2k_{\alpha}^3j_{\nu_\alpha,1}j_{\nu_\alpha,k}}{|J'_{\nu_\alpha}(j_{\nu_\alpha,1})||J'_{\nu_\alpha}(j_{\nu_\alpha,k})|}J'_{\nu_\alpha}(j_{\nu_\alpha,1})J'_{\nu_\alpha}(j_{\nu_\alpha,k}).
\end{equation}
Now, recall that the zeros of $J'_{\nu_\alpha}$, $j'_{\nu_\alpha,k}$, satisfy $\nu_\alpha<j'_{\nu_\alpha,1}<j_{\nu_\alpha,1}<j'_{\nu_\alpha,2}<j_{\nu_\alpha,2}\dots$, to conclude that the right-hand side of \eqref{43} does not vanish.

From \eqref{39} and \eqref{43} we deduce that there exists a constant $C$ such that
\begin{equation}\label{44}
\left| \langle \mu\varphi_1,\varphi_k\rangle\right|\geq \frac{C}{\lambda_k^{3/2}},\quad\forall k\in\NN^*,\,k\neq1.
\end{equation}

For $k=1$, we have
\begin{equation}
\begin{split}
\langle \mu\varphi_1,\varphi_1\rangle&=\frac{2k_\alpha}{|J'_{\nu_\alpha}
(j_{\nu_\alpha,1})|^2}\int_0^1 x^{2-\alpha}x^{1-\alpha}J^2_{\nu_\alpha}(j_{\nu_\alpha,1}x^{k_\alpha})dx\\
&=\frac{4k_\alpha j^4_{\nu_\alpha,1}}{(2-\alpha)|J'_{\nu_\alpha}
(j_{\nu_\alpha,1})|^2}\int_0^{j_{\nu_\alpha,1}}z^3J^2_{\nu_\alpha}(z)dz.
\end{split}
\end{equation}

We now appeal to the identity
\begin{equation}\label{46}
\begin{split}
(\sigma+&2)\int^z t^{\sigma+2}J_{\nu}(t)dt=(\sigma+1)\left\{\nu^2-\frac{1}{4}(\sigma+1)^2\right\}\int^z t^\sigma J^2_\nu(t)dt\\
&+\frac{1}{2}z^{\sigma+1}\left[\left\{zJ'_\nu(z)-\frac{1}{2}(\sigma+1)J_\nu(z)\right\}^2+\left\{z^2-\nu^2+\frac{1}{4}(\sigma+1)^2\right\}J^2_\nu(z)\right]
\end{split}
\end{equation}
with $\sigma=1$ (see \cite{l}, equation (17) page 256) to turn \eqref{46} into
\begin{equation}\label{47}
\begin{split}
\langle \mu\varphi_1,&\varphi_1\rangle=\frac{4k_\alpha j^4_{\nu_\alpha,1}}{(2-\alpha)|J'_{\nu_\alpha}
(j_{\nu_\alpha,1})|^2}\frac{2}{3}\left\{\nu_\alpha^2-1\right\}\int_0^{j_{\nu_\alpha,1}}zJ^2_{\nu_\alpha}(z)dz\\
&+\frac{1}{6}j^3_{\nu_\alpha,1}\left[\left\{j_{\nu_\alpha,1}J'_{\nu_\alpha}(j_{\nu_\alpha,1})-J_{\nu_\alpha}(j_{\nu_\alpha,1})\right\}^2+\left\{j^2_{\nu_\alpha,1}-\nu_\alpha^2+1\right\}J^2_{\nu_\alpha}(j_{\nu_\alpha,1})\right].
\end{split}
\end{equation}
Using Lommel's integral
\begin{equation}
\int_0^c zJ_\nu(az)^2dz=\frac{c^2}{2}\left[J^2_\nu(ac)-J_{\nu-1}(ac)J_{\nu+1}(ac)\right]
\end{equation}
in \eqref{47}, we obtain
\begin{small}\begin{equation*}
\begin{split}
\langle \mu\varphi_1,&\varphi_1\rangle=\\
&=\frac{4k_\alpha j^4_{\nu_\alpha,1}}{(2-\alpha)|J'_{\nu_\alpha}
(j_{\nu_\alpha,1})|^2}\left[\frac{2}{3}\left\{\nu_\alpha^2-1\right\}\left(\frac{j^2_{\nu_\alpha,1}}{2}\left(J^2_{\nu_\alpha}(j_{\nu_\alpha,1})-J_{\nu_\alpha-1}(j_{\nu_\alpha,1})J_{\nu_\alpha+1}(j_{\nu_\alpha,1})\right)\right)\right.\\
&\,\qquad\qquad\qquad\qquad\qquad\qquad\qquad\qquad\qquad\qquad\qquad\quad+\left.\frac{1}{6}j^3_{\nu_\alpha,1}\left(j_{\nu_\alpha,1}J'_{\nu_\alpha}(j_{\nu_\alpha,1})\right)^2\right]\\
&=\frac{4k_\alpha j^4_{\nu_\alpha,1}}{(2-\alpha)|J'_{\nu_\alpha}
(j_{\nu_\alpha,1})|^2}\left(-\frac{1}{3}j^2_{\nu_\alpha,1}\left\{\nu_\alpha^2-1\right\}J_{\nu_\alpha-1}(j_{\nu_\alpha,1})J_{\nu_\alpha+1}(j_{\nu_\alpha,1})\right.\\
&\,\,\,\,\quad\qquad\qquad\qquad\qquad\qquad\qquad\qquad\qquad+\left.\frac{1}{24}j^5_{\nu_\alpha,1}\left(J_{\nu_\alpha-1}(j_{\nu_\alpha,1})-J_{\nu_\alpha+1}(j_{\nu_\alpha,1})\right)^2\right)\\
&=\frac{4k_\alpha j^4_{\nu_\alpha,1}}{(2-\alpha)|J'_{\nu_\alpha}
(j_{\nu_\alpha,1})|^2}\left(\frac{1}{24}j^5_{\nu_\alpha,1}\left(J^2_{\nu_\alpha-1}(j_{\nu_\alpha,1})+J^2_{\nu_\alpha+1}(j_{\nu_\alpha,1})\right)\right.\\
&\,\,\,\quad\qquad\qquad\qquad\qquad\qquad-\left.\left(\frac{1}{3}j^2_{\nu_\alpha,1}\left\{\nu_\alpha^2-1\right\}+\frac{j^5_{\nu_\alpha,1}}{12}\right)J_{\nu_\alpha-1}(j_{\nu_\alpha,1})J_{\nu_\alpha+1}(j_{\nu_\alpha,1})\right)\\
&\geq \frac{4k_\alpha j^4_{\nu_\alpha,1}}{(2-\alpha)|J'_{\nu_\alpha}
(j_{\nu_\alpha,1})|^2}\left(\frac{1}{24}j^5_{\nu_\alpha,1}\left(J^2_{\nu_\alpha-1}(j_{\nu_\alpha,1})+J^2_{\nu_\alpha+1}(j_{\nu_\alpha,1})\right)\right.\\
&\,\,\,\,\,\quad\quad\qquad\qquad\qquad\left.-\left(\frac{1}{3}j^2_{\nu_\alpha,1}\left\{\nu_\alpha^2-1\right\}+\frac{j^5_{\nu_\alpha,1}}{12}\right)\frac{1}{2}\left(J^2_{\nu_\alpha-1}(j_{\nu_\alpha,1})+J^2_{\nu_\alpha+1}(j_{\nu_\alpha,1})\right)\right).
\end{split}
\end{equation*}\end{small}

Thus, $\langle \mu\varphi_1,\varphi_1\rangle>0$ if
\begin{equation}\label{50}
\frac{1}{24}j^5_{\nu_\alpha,1}>\frac{1}{2}\left(\frac{1}{3}j^2_{\nu_\alpha,1}\left\{\nu_\alpha^2-1\right\}+\frac{j^5_{\nu_\alpha,1}}{12}\right).
\end{equation}
Since 
\begin{equation*}
\begin{array}{l}
\alpha\in[0,1)\,\,\Rightarrow\,\, \nu_\alpha\in(0,1/2],\vspace{.1cm}\\
\alpha\in[1,3/2)\,\,\Rightarrow \nu_\alpha\in[0,1),
\end{array}
\end{equation*}
equation \eqref{50} holds true for both weak and strong degeneracy.

Thus, since $\langle \mu\varphi_1,\varphi_k\rangle\neq0$ for every $k\in\NN^*$ and \eqref{44} is valid, the series \eqref{32b} converges for every $\tau>0$.

We have checked that every hypothesis of Theorem \ref{t1} holds for problem \eqref{28} if $\alpha\in[0,3/2)$. Therefore, we conclude that, for any $\rho>0$, if the initial condition $u_0$ is close enough to $\varphi_1$, the system is superexponentially stabilizable to the ground state solution $\psi_1$. Moreover, the following estimate holds true
\begin{equation}
||u(t)-\psi_1(t)||\leq M e^{-(\rho e^{\omega t}+\lambda_1 t)},\qquad \forall t\geq 0.
\end{equation}
where $M,\omega>0$ are suitable constants.
\end{proof}

\Acknowledgement{
This paper was partly supported by the INdAM National Group for Mathematical Analysis, Probability and their Applications.
The first author acknowledges support from the MIUR Excellence Department Project awarded to the Department of Mathematics, University of Rome Tor Vergata, CUP E83C18000100006.  The second  author is grateful to University Italo Francese (Vinci Project 2018) for partial support.}

%
%

\end{document}